\newcommand{\xmark}{\ding{55}}
\newcommand{\om}{\omega}
\theoremstyle{plain}
\newtheorem{theorem}{Theorem}[section]
\newtheorem*{theorem*}{Theorem}
\newtheorem{rem}[theorem]{Remark}
\newtheorem*{mt*}{Main Theorem}
\newcommand\g{{\mathfrak{g}}}
\newcommand\h{{\mathfrak{h}}}
\newcommand\m{{\mathfrak{m}}}
\newcommand{\D}{\mathcal D}
\begin{document}
\title[On Conformally flat homogeneous Walker four-manifolds]{On Conformally flat homogeneous Walker four-manifolds}
\author{M. Chaichi, A. Zaeim and Y. Keshavarzi}
\date{}

\address{Department of Mathematics\\Payame noor University\\P.O. Box 19395-3697\\Tehran\\Iran.}
\email{Mohammad Chaichi: chaichi@pnu.ac.ir, Amirhesam Zaeim: zaeim@pnu.ac.ir, Yadollah Keshavarzi: y.keshavarzi@pnu.ac.ir}

\subjclass[2000]{53C50, 53C15, 53C25}
\keywords{homogeneous manifold, Ricci operator, conformally flat space, Walker manifold.}

\begin{abstract}
In this paper we study the invariant Walker structures over the conformally flat four-dimensional homogeneous manifolds according to the Segre types of the Ricci operator.
\end{abstract}

\maketitle

\section{Introduction}
Conformally flat spaces are the subject of many investigations in Riemannian and pseudo-Riemannian geometry. A conformally flat (locally) homogeneous Riemannian manifold is (locally) symmetric \cite{Ta}, and so, as proved in \cite{Ry}, it admits an universal covering either a space form $\mathbb R^n$, $\mathbb S^n(k)$, $\mathbb H^n(-k)$, or one of the Riemannian products $\mathbb R \times \mathbb S^{n-1}(k)$, $\mathbb R\times \mathbb H^{n-1}(-k)$ and $\mathbb S^{p}(k)\times \mathbb H^{n-p}(-k)$.

In the pseudo-Riemannian setting the problem is more complicated and of course interesting. In dimension three, the conformally flat examples were classified independently in \cite{Ho} and \cite{Ca}, where contrary to the Riemannian case they showed the existence of non-symmetric examples. By expanding the results of \cite{Ho}, the same authors solved the classification problem for the Lorentzian manifolds of any dimension with diagonalizable Ricci operator \cite{Ho2}.
For homogeneous spaces, the classification problem has been completely solved for both Lorentzian and neutral signatures in dimension four \cite{Ca1}. A fundamental step for this classification was to determine the forms ({\em Segre types}) of the Ricci operator. Homogeneous spaces are the subject of many interesting research projects in the pseudo-Riemannian framework. Four-dimensional homogeneous Lorentzian and neutral signature manifolds were studied in \cite{Ca2} and \cite{ChZ1} respectively and Lorentzian Lie groups with complete classification of Einstein and Ricci parallel examples were considered in dimension four in \cite{Ca3}.

A pseudo-Riemannian manifold which admits a parallel degenerate distribution is called a {\em Walker} manifold. Walker spaces were introduced by Arthur Geoffrey Walker in 1949. The existence of such structures causes many interesting properties for the manifold with no Riemannian counterpart. Walker also determined a standard local coordinates for these kind of manifolds \cite{Wa1,Wa2}.

 In this paper, which is based on the study of conformally flat spaces in \cite{Ca1}, we have determined invariant Walker structures in the case of four-dimensional conformally flat homogeneous manifolds. Conformally flat homogeneous spaces have been studied classically in pseudo-Riemannian geometry. As it is known, the existence of Walker structures on a manifold can be responsible for the existence of non-symmetric examples. So we analyze the conformally flat homogeneous pseudo-Riemannian Walker four-manifolds.


  The paper is organized as follows. We recall some basic facts about the Ricci operator of a Conformally flat homogeneous four-dimensional manifold in Section 2. In Section 3 according to the forms (Segre types) of the Ricci operator we study the left-invariant Walker structures of cases with non-degenerate Ricci operator. The cases with degenerate Ricci operator and trivial isotropy invariant Walker structures will be studied in Section 4. In Section 5 we consider invariant Walker structures on the cases with degenerate Ricci operator and non-trivial isotropy.

\section{preliminaries}
Let $(M,g)$ be a pseudo-Riemannian manifold of dimension $n\geq3$ and $\nabla$ its Levi-Civita connection. We use the curvature tensor with the sign convention $R(X,Y)=[\nabla_{X},\nabla_{Y}]-\nabla_{[X,Y]}$ for all vector
fields $X$, $Y$ on $M$. The Ricci tensor is given by the identity
\begin{eqnarray}\label{Ric}
\varrho(X,Y)=\sum_{i=1}^4\varepsilon_i g(R(e_i,X)Y,e_i),
\end{eqnarray}
for all $X, Y \in T_pM$, where $\{e_1,e_2,e_3,e_4\}$ is a pseudo-orthonormal basis for the tangent space $T_pM$. We denote the Ricci operator and the scalar curvature by $Q$ and $\tau$ respectively. Let $p$ be a point of $M$ and $\lbrace e_{1},...,e_{n}\rbrace$
an orthonormal basis of the tangent space $T_{p}M$. It is well-known that
for a conformally
flat space the curvature tensor can be completely determined using the Ricci tensor by the identity
\begin{equation}\label{CurvFromRic}
\begin{array}{cccc}
R_{ijkh}=\frac{1}{n-2}(g_{ih}\varrho_{jk}+g_{jk}\varrho_{ih}
-g_{ik}\varrho_{jh}-g_{jh}\varrho_{ik})\\
-\frac{\tau}{(n-1)(n-2)}
(g_{ih}\varrho_{jk}-g_{ik}\varrho_{jh}).
\end{array}
\end{equation}
Moreover, the Equation \eqref{CurvFromRic} characterizes conformally
flat pseudo-Riemannian manifolds of dimension
$n\geq4$, while it is trivially satisfied by any three-dimensional manifold. Conversely,
the condition
\begin{eqnarray}
\begin{array}{cr}
\nabla_{i} \varrho_{jk}-\nabla_{j} \varrho_{ik}=\frac{1}{2(n-2)}(g_{jk}\nabla_{i}\tau-g_{ik}\nabla_{j}\tau),
\end{array}
\end{eqnarray}
which characterizes three-dimensional conformally flat spaces, is trivially satisfied
by any conformally flat Riemannian manifold of dimension greater than three.

Now, let $(M,g)$ be a locally homogeneous pseudo-Riemannian manifold. Then, for each pair of points {$p,p' \in M$}, there exists a local isometry {$\phi$} between neighbourhoods of $p$ and {$p'$}, such that {$\phi(p)=p'$}. In particular, for any choice of an index $k$, {$\phi^*:T_{p'}M\rightarrow T_pM$} satisfies {$\phi^*(\nabla^i R_{p'})=\nabla^i R_p$} for all $i=0,..,k$. Consequently, chosen a pseudo-orthonormal basis $\{e_i\}_p$ for $T_pM$, by means of the isometries between $p$ and any other point {$p'\in M$}, one can build a pseudo-orthonormal frame field $\{e_i\}$ on $M$, with respect to which the components of the curvature tensor and its covariant derivatives up to order $k$  are globally constant on $M$.

In the special case when $(M,g)$ is conformally flat, this is equivalent to determining a pseudo-orthonormal frame field $\{e_i \}$ on $(M,g)$, such that the components of the Ricci tensor $\varrho$ and its covariant derivatives  $\nabla^i\varrho$, for $i=1,\dots,k$, are constant globally on $M$. To note that in particular, with respect to $\{e_i \}$, the components of the Ricci operator $Q$ are constant. Specially the Segre type of the Ricci operator stays constant on the whole space.

Following \cite{Ca1}, we now recall the possible Segre types of the Ricci operator for a conformally flat homogeneous four-dimensional manifold through the following tables.\\
%
%
%
%
%
%
%
\begin{center}
{\bf Table I: Segre types of $Q$ for an inner product of signature ${\bf (2,2)}$.}\nopagebreak \\[3 pt]
\begin{tabular}{!{\vrule width 2pt}l|c|c|c|c|c!{\vrule width 2pt}}
\noalign{\hrule height 2pt}
Case $\vphantom{\displaystyle A^{{A}^{A}}}$
&Ia&Ib&Ic&IIa&IIb\\[2 pt]
\hline
Non-degenerate type $\vphantom{\displaystyle A^{{A}^{A}}}$ &---&$[1,11\bar 1]$&$[1\bar 11\bar 1]$&---&$[22]$\\[2 pt]
\hline
Degenerate types $\vphantom{\displaystyle A^{{A}^{A}}}$
&$[(11),(11)]$&$[(1,1)1\bar 1]$&$[(1\bar 11\bar 1)]$&$[(1,1)2]$&$[(22)]$\\
&$[(1|(1,1)|1)]$&&&$[1,(12)]$&\\
&$[(11,1)1]$&&&$[(1,12)]$&\\
&$[1(1,11)]$&&&&\\
&$[(11,11)]$ &&&&\\[2 pt]
\hline
Case $\vphantom{\displaystyle A^{{A}^{A}}}$
&IIc&IId&IIIa&IIIb&IV\\
\hline
Non-degenerate type $\vphantom{\displaystyle A^{{A}^{A}}}$ &$[21\bar 1]$&$[2\bar2]$&$[13]$&[1,3]&$[4]$\\[2 pt]
\hline
Degenerate types $\vphantom{\displaystyle A^{{A}^{A}}}$ &---&---&$[(13)]$&[(1,3)]&---\\[2 pt]
\noalign{\hrule height 2pt}
\end{tabular}  

\medskip
{\bf Table II: Segre types of $Q$ for a Lorentzian inner product.}\nopagebreak  \\[3 pt]
\begin{tabular}{!{\vrule width 2pt}l|c|c|c|c!{\vrule width 2pt}}
\noalign{\hrule height 2pt}
Case $\vphantom{\displaystyle A^{{A}^{A}}}$ &Ia&Ib&II&III\\[2 pt]
\hline
Non-degenerate type $\vphantom{\displaystyle A^{{A}^{A}}}$ &---&$[11,1\bar 1]$&---&[1,3]\\[2 pt]
\hline
Degenerate types $\vphantom{\displaystyle A^{{A}^{A}}}$ &$[(11)(1,1)]$&$[(11),1\bar 1]$&$[(11),2]$&$[(1,3)]$\\
&$[1(11,1)]$&&$[1(1,2)]$&\\
&$[(111),1]$&&$[(11,2)]$&\\
&$[(111,1)]$&&&\\[2 pt]
\noalign{\hrule height 2pt}
\end{tabular}
\vspace{.1cm}
\end{center}
By the result of \cite{Ho}, for a conformally flat homogeneous manifold of dimension four with digonalizable Ricci operator, the problem of study Walker structures reduces to the well known space forms.

\begin{theorem}\cite{Ho}\label{CFdiag}
Let $M_q^n$ be an $n(\geq 3)$-dimensional conformally flat homogeneous pseudo-Riemannian manifold with diagonalizable Ricci operator. Then, $M_q^n$ is locally isometric to one of the following:
\begin{itemize}
\item[(i)] A pseudo-Riemannian space form;
\item[(ii)] A product manifold of a $m$-dimensional space form of constant curvature $k \neq 0$ and a $(n-m)$-dimensional pseudo-Riemannian manifold of constant curvature $-k$, where $2\leq m\leq n-2$;
\item[(iii)] A product manifold of a $(n-1)$-dimensional pseudo-Riemannian manifold of index $q - 1$ of constant curvature $k\neq 0$ and an one-dimensional Lorentzian manifold, or a product of a $(n-1)$-dimensional pseudo-Riemannian manifold of index $q$ of constant curvature $k \neq 0$ and an one-dimensional Riemannian
manifold.
\end{itemize}
\end{theorem}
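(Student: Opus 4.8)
The plan is to show that conformal flatness together with homogeneity forces the Ricci tensor to be a Codazzi tensor with constant eigenvalues, to split $M_q^n$ along the eigendistributions of $Q$, and then to read off the curvature of each factor from \eqref{CurvFromRic}.

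First I would extract the analytic consequences of the hypotheses. For $n\geq 4$ the vanishing of the Weyl tensor yields, after one divergence, the three-dimensional Codazzi-type identity recalled in Section~2 (which for $n=3$ is itself the conformal flatness condition); its right-hand side is a multiple of $g_{jk}\nabla_i\tau-g_{ik}\nabla_j\tau$. Homogeneity, through the adapted pseudo-orthonormal frame of Section~2, makes the components of $Q$ globally constant, so $\tau$ is constant, $\nabla\tau=0$, and the identity collapses to $\nabla_i\varrho_{jk}=\nabla_j\varrho_{ik}$; that is, $\varrho$ is a Codazzi tensor. In the same frame the eigenvalues $\lambda_1,\dots,\lambda_r$ of $Q$---real, since $Q$ is diagonalizable---and their multiplicities are constant on $M$.

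Next I would decompose $TM$ into the eigendistributions $E_1,\dots,E_r$ of $Q$. Diagonalizability makes these mutually $g$-orthogonal with $g|_{E_a}$ non-degenerate, so the phenomenon peculiar to indefinite signature, namely a degenerate eigenspace, is excluded. The classical fact that a symmetric Codazzi tensor with constant eigenvalues has parallel eigendistributions then applies---here I would check that its proof is purely algebraic, using only non-degeneracy of the $E_a$, and hence survives the passage to indefinite $g$. The local de Rham--Wu splitting then gives a pseudo-Riemannian product $M=M_1\times\cdots\times M_r$ with $TM_a=E_a$ and $\varrho=\lambda_a\,g$ along $E_a$.

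Finally I would feed this back into \eqref{CurvFromRic}. Restricting all four indices to one block $E_a$ reduces the identity to $R_{ijkh}=c_a\,(g_{ih}g_{jk}-g_{ik}g_{jh})$ with $c_a=\frac{2\lambda_a}{n-2}-\frac{\tau}{(n-1)(n-2)}$, so each factor $M_a$ has constant curvature $c_a$. Taking instead $i,h\in E_a$ and $j,k\in E_b$ with $a\neq b$, the left-hand side vanishes because mixed curvatures of a product are zero, and this forces $\lambda_a+\lambda_b=\frac{\tau}{n-1}$, equivalently $c_a+c_b=0$, for every pair $a\neq b$; with three or more distinct eigenvalues this is contradictory, so $r\leq 2$. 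One eigenvalue yields an Einstein conformally flat space, hence a space form, giving case (i). Two eigenvalues yield $M_1\times M_2$ with opposite constant curvatures $k$ and $-k$: when both dimensions are at least two one obtains case (ii) (with $k\neq 0$, since $k=0$ returns to a flat space form), and when one factor is one-dimensional, hence flat, one obtains case (iii). The main obstacle is this middle step---securing the parallelism of the eigendistributions and the Wu decomposition in the indefinite category---together with the signature bookkeeping at the end, which is what distinguishes the two alternatives of case (iii) according to whether the one-dimensional factor is timelike or spacelike.
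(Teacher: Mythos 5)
The paper does not prove this statement at all: it is quoted from Honda--Tsukada \cite{Ho}, so your proposal can only be judged against the known proof of that result, not against anything in this paper. Your overall skeleton is the standard one, and your endgame is fine: diagonalizability does give mutually orthogonal, non-degenerate eigendistributions; restricting \eqref{CurvFromRic} to a block does give constant curvature $c_a=\frac{2\lambda_a}{n-2}-\frac{\tau}{(n-1)(n-2)}$; mixed blocks of a product do force $c_a+c_b=0$; and the signature bookkeeping in case (iii) is correct.

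The genuine gap is the middle step, and it makes the argument circular. The ``classical fact'' you invoke --- a symmetric Codazzi tensor with constant eigenvalues has parallel eigendistributions --- is true only when there are at most two distinct eigenvalues, and false in general. Concretely: the shape operator of Cartan's homogeneous isoparametric hypersurface in $\mathbb{S}^4$ is a Codazzi tensor (by the Codazzi equation in a space form) with three distinct constant principal curvatures, yet its eigendistributions are not parallel; if they were, that hypersurface would split locally as a product of three intervals and hence be flat, which it is not (its mixed sectional curvatures are $1+\kappa_i\kappa_j\neq 0$). Your own metric-compatibility argument for parallelism only closes up when there is no third eigenspace: with eigenvalues $\lambda_a,\lambda_b,\lambda_c$ the Codazzi relations merely intertwine the mixed components $g(\nabla_XY,Z)$, for $X\in E_a$, $Y\in E_b$, $Z\in E_c$, and are perfectly consistent with their being nonzero. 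Since you obtain the bound of at most two distinct eigenvalues only \emph{after} splitting (from vanishing mixed curvatures of a product), while the splitting needs parallelism, which in turn needs that bound, the proof as structured does not close. Moreover the missing step cannot be supplied by pointwise algebra: formula \eqref{CurvFromRic} defines a legitimate algebraic curvature tensor for any diagonalizable $\varrho$, with any number of eigenvalues, so excluding three or more distinct eigenvalues genuinely requires the differential information --- one must compute the curvature from a connection carrying the unknown mixed components (constant in an adapted frame, by homogeneity) and show it is incompatible with the form \eqref{CurvFromRic}. That exclusion is the substantive content of Honda--Tsukada's proof; once it is in hand, your parallelism argument for two eigenvalues, Wu's splitting theorem, and your final case analysis go through as written.
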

It is obvious from the last theorem that if $(M,g)$ have digonalizable Ricci operator then the Ricci operator is degenerate. So the study of cases with non-degenerate Ricci operator restricts to the not diagonalizable ones.

\section{Cases with non-degenerate Ricci operator}
Let $(M,g)$ be a conformally flat homogeneous four dimensional manifold with non-degenerate Ricci operator.
For any point $p\in M$, we have that $g(0, p)=\{ 0\}$  if and only if $ Q_{p} $ is
non-degenerate. Therefore, $(M,g)$ is locally isometric to a Lie group equipped with a left-invariant pseudo-Riemannian metric and the Ricci operator of conformally flat homogeneous pseudo-Riemannian four-manifolds can only be of Segre type $[1,11\bar{1}]$ if $g$ is neutral, or $[11,1\bar{1}]$ if $g$ is Lorentzian \cite{Ca1}. The Lie group structure of the mentioned types could be realized by the following theorems.

\begin{theorem}\cite{Ca1}\label{ND-NU}
Let $(M,g)$ be a conformally flat homogeneous four-dimensional manifold with the Ricci operator of Segre type $[1,11\bar1]$. Then, $(M,g)$ is locally isometric to {one of the unsolvable Lie groups $SU(2) \times \mathbb R$ or $SL(2,\mathbb R) \times \mathbb R$}, equipped with a left-invariant neutral metric, admitting a pseudo-orthonormal basis  $\{e_1,e_2,e_3,e_4\}$ for their Lie algebra, 
such that the Lie brackets take one of the following forms:

$$
\begin{array}{llll}
{\rm i)} \quad & [e_1,e_2]=\varepsilon\alpha e_3,\quad &[e_1,e_3]=-\varepsilon\alpha e_2,\quad & [e_2,e_3]=2\alpha(e_1+\varepsilon e_4),\\[2pt] &
[e_2,e_4]=-\alpha e_3,\quad & [e_3,e_4]=\alpha e_2, &
\end{array}
$$
$$
\begin{array}{llll}
{\rm ii)} \quad & [e_1,e_2]=-\varepsilon\alpha e_1,\quad &[e_1,e_3]=\alpha e_1,\quad & [e_1,e_4]=2\alpha(\varepsilon e_2-e_3),\\[2pt] &
[e_2,e_4]=-\varepsilon\alpha e_4,\quad & [e_3,e_4]=\alpha e_4,&
\end{array}
$$
where $\alpha\neq 0$ is a real constant and $\varepsilon=\pm1$.
\end{theorem}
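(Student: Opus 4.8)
The plan is to argue by a direct structural analysis of the left-invariant metric, exploiting that conformal flatness makes the whole curvature tensor an explicit function of the Ricci operator. First I would record that, since $Q_p$ is non-degenerate the isotropy is trivial, so $(M,g)$ is locally a four-dimensional Lie group $G$ carrying a left-invariant neutral metric; it therefore suffices to classify the admissible Lie algebras $\g$ together with a pseudo-orthonormal basis $\{e_1,e_2,e_3,e_4\}$. I would then fix this basis to be adapted to the Segre form $[1,11\bar1]$: the two real eigenvalues of $Q$ are carried by a $g$-orthogonal $(1,1)$-plane $\langle e_1,e_4\rangle$ (accounting for the comma), while the conjugate pair of complex eigenvalues $a\pm b\,i$ acts on the complementary $(1,1)$-plane $\langle e_2,e_3\rangle$ as a rotation-type block, with $b\neq0$. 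The only remaining gauge freedom is the stabilizer of this canonical form inside $O(2,2)$, which I will use later to normalize the structure constants.

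Next, because $n=4$ and $(M,g)$ is conformally flat, \eqref{CurvFromRic} expresses every component $R_{ijkh}$ as a fixed quadratic expression in the eigenvalue parameters $(\lambda_1,\lambda_4,a,b)$ and the constant metric components. On the other hand, for a left-invariant metric the Levi-Civita connection is given by the Koszul formula in terms of the structure constants $c_{ij}^{k}$, so that each $R(e_i,e_j)e_k$ is a universal quadratic polynomial in the $c_{ij}^{k}$. The core of the proof is to impose
$$R^{\mathrm{alg}}_{ijkh}(c)=R^{\mathrm{cf}}_{ijkh}(\lambda_1,\lambda_4,a,b),$$
together with the Jacobi identity on the $c_{ij}^{k}$ and the requirement that the resulting Ricci operator genuinely has Segre type $[1,11\bar1]$. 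A useful reduction is to exploit the first-order consequence of conformal flatness, the vanishing of the Cotton tensor, which with $\tau$ constant reads $\nabla_i\varrho_{jk}=\nabla_j\varrho_{ik}$; this Codazzi condition yields linear relations among the $c_{ij}^{k}$ and considerably thins out the system before the full quadratic matching is enforced.

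Solving this polynomial system is where the real work lies, and it is the step I expect to be the main obstacle. Using the residual $O(2,2)$-freedom to normalize, and splitting into cases according to the sign $\varepsilon=\pm1$ that distinguishes the two admissible orientations of the complex block relative to the real eigenplane, the system should collapse to a single free parameter $\alpha\neq0$ and reproduce exactly the two bracket families i) and ii); one must check that no further branch survives once the Segre type and the Jacobi identity are imposed simultaneously. The bookkeeping of the quadratic matching equations and the elimination of spurious solutions is the delicate part.

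Finally I would identify the Lie group. Reading off the center of $\g$ from either family, for instance in form i) the element $e_1-\varepsilon e_4$ is central while $\{e_1+\varepsilon e_4,e_2,e_3\}$ spans a three-dimensional simple ideal, one obtains a decomposition $\g\cong\mathfrak{s}\oplus\R$ with $\mathfrak{s}$ simple. Comparing the remaining signs shows $\mathfrak{s}\cong\mathfrak{su}(2)$ or $\mathfrak{s}\cong\mathfrak{sl}(2,\R)$ according to $\varepsilon$ and the signs of the real eigenvalues, so that $G$ is locally isometric to $SU(2)\times\R$ or $SL(2,\R)\times\R$. It is worth emphasising that the unsolvability is forced rather than assumed: the complex eigenvalues of $Q$ produce a genuinely rotational piece of curvature that no solvable four-dimensional algebra can realize, which is precisely what makes a simple factor unavoidable.
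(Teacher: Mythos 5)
First, a point of context: this paper never proves Theorem \ref{ND-NU} at all --- it is quoted from \cite{Ca1} --- so your attempt has to stand on its own as a proof of the classification. As a strategy it is the natural one (and essentially the route taken in \cite{Ca1}): non-degenerate $Q$ gives trivial isotropy, hence a Lie group with a left-invariant neutral metric; put $Q$ in the canonical form of Segre type $[1,11\bar 1]$, with the two real eigenvalues on a $(1,1)$-plane and the complex pair on the orthogonal $(1,1)$-plane; then match \eqref{CurvFromRic} against the curvature computed from the Koszul formula, subject to the Jacobi identity, after the linear reduction coming from the Codazzi form of the constant-$\tau$ conformal flatness condition. Your concluding identification is also correct and checkable: in family i), $e_1-\varepsilon e_4$ is central, $\{e_1+\varepsilon e_4,e_2,e_3\}$ spans a simple ideal, and the sign analysis gives $\mathfrak{su}(2)$ for $\varepsilon=1$ and $\mathfrak{sl}(2,\mathbb R)$ for $\varepsilon=-1$; an analogous check works for family ii).

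The genuine gap is that the decisive step is announced, not performed. The entire content of the theorem is that the polynomial system --- conformal flatness plus Jacobi plus the Segre-type requirement, modulo the residual gauge in $O(2,2)$ --- has exactly the two one-parameter families i) and ii) as its solution set. You write that the system ``should collapse'' to these families and that ``one must check that no further branch survives''; that check \emph{is} the proof, and it is absent. Nothing in your text excludes additional branches (solvable algebras, families with more essential parameters, or solutions whose Ricci operator degenerates to a different Segre type), and this elimination is where all the work of \cite{Ca1} lies. Moreover, your closing claim that ``unsolvability is forced'' because a solvable algebra cannot realize the rotational curvature block is an unproven assertion: in the actual logic, unsolvability is read off a posteriori from the classified brackets (exactly as you do for family i)), so it cannot simultaneously be invoked a priori to trim the system. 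A minor secondary gap: the triviality of the isotropy for non-degenerate $Q$ is recorded without justification; it does follow, since a skew-symmetric endomorphism commuting with an operator of type $[1,11\bar 1]$ with pairwise distinct eigenvalues must preserve the two eigenlines and the complex block and hence vanish, but that argument should appear. As it stands, your text is a correct plan plus a verification that the stated answer is consistent with that plan; the derivation showing that the stated answer is the \emph{complete} answer is missing.
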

And for the Lorentzian signature we have:

\begin{theorem}\cite{Ca1}\label{ND-LO}
Let $(M,g)$ be a conformally flat homogeneous Lorentzian four-manifold with the Ricci operator of Segre type $[11,1\bar1]$. Then, $(M,g)$ is locally isometric to one of the unsolvable Lie groups $SU(2) \times \mathbb R$ or $SL(2,\mathbb R) \times \mathbb R$, equipped with a left invariant Lorentzian metric, admitting a pseudo-orthonormal basis  $\{e_1,e_2,e_3,e_4\}$ for the Lie algebra, such that the Lie brackets take one of the following forms:
$$
\begin{array}{llll}
{\rm i)} \quad & [e_1,e_2]=-2\alpha(\varepsilon e_3+e_4),\quad&[e_1,e_3]=\varepsilon\alpha e_2,\quad& [e_1,e_4]=\alpha e_2,\\[2pt] &
[e_2,e_3]=\varepsilon\alpha e_1,\quad & [e_2,e_4]=\alpha e_1,
\end{array}
$$$$
\begin{array}{llll}
{\rm ii)} \quad & [e_1,e_2]=2\alpha(\varepsilon e_3+e_4),\quad&[e_1,e_3]=\varepsilon\alpha e_2,\quad& [e_1,e_4]=\alpha e_2,\\[2pt]
& [e_2,e_3]=\varepsilon\alpha e_1,\quad & [e_2,e_4]=\alpha e_1,
\end{array}
$$
where $\alpha\neq 0$ is a real constant and $\varepsilon=\pm1$.
\end{theorem}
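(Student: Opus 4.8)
The plan is to reduce the statement to an explicit computation at the level of Lie algebras. First, since the Ricci operator $Q_p$ is non-degenerate at every point, the isotropy of the homogeneous structure is trivial, so (as noted at the start of Section 3) $(M,g)$ is locally isometric to a four-dimensional Lie group $G$ carrying a left-invariant Lorentzian metric; I fix a pseudo-orthonormal basis $\{e_1,e_2,e_3,e_4\}$ of $\mathfrak{g}=\operatorname{Lie}(G)$ with $e_4$ timelike and write the unknown structure constants $[e_i,e_j]=\sum_k c_{ij}^k e_k$. Because the Segre type is $[11,1\bar1]$, the operator $Q$ is not diagonalizable over $\R$: it has two real eigenvalues whose eigenvectors span a spacelike $2$-plane, together with a complex-conjugate pair acting on the complementary Lorentzian $2$-plane. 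I would use the residual freedom of pseudo-orthonormal frames preserving this algebraic type (rotations inside the $1\bar1$ block, plus any interchange permitted by coincidences of eigenvalues) to put $\varrho_{ij}=g(Qe_i,e_j)$ into a fixed normal form, thereby fixing the shape of the Ricci tensor in the chosen frame.

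Next I would translate the geometric hypotheses into algebraic equations. Using the Koszul formula, the Levi-Civita connection coefficients $\nabla_{e_i}e_j$ are explicit linear expressions in the $c_{ij}^k$; from these one computes $R(e_i,e_j)e_k$ and then, via \eqref{Ric}, the Ricci components $\varrho_{ij}$ as quadratic polynomials in the structure constants. Conformal flatness enters through \eqref{CurvFromRic}: the directly computed curvature must coincide with the expression built from $\varrho$ and $\tau$. Equating the two, together with the requirement that $\varrho$ realize the prescribed normal form of Segre type $[11,1\bar1]$ and with the Jacobi identities on the $c_{ij}^k$, yields a closed polynomial system in the structure constants and the eigenvalue parameters.

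I would then solve this system. Exploiting homogeneity (all components being globally constant) and the remaining frame freedom to eliminate redundant parameters, the analysis should force most $c_{ij}^k$ to vanish and tie the survivors to a single scale $\alpha\neq0$ and a sign $\varepsilon=\pm1$, producing exactly the two bracket families i) and ii). To identify the groups I would compute the derived algebra: in each family it is three-dimensional and simple, with brackets proportional to those of $\mathfrak{su}(2)$ or $\mathfrak{sl}(2,\R)$ according to the sign of the induced Killing form, while the remaining direction spans a central $\R$ factor; hence $\mathfrak{g}\cong\mathfrak{su}(2)\oplus\R$ or $\mathfrak{sl}(2,\R)\oplus\R$, that is, $G$ is $SU(2)\times\R$ or $SL(2,\R)\times\R$.

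The main obstacle is the middle step: the combined system coming from \eqref{CurvFromRic}, the Segre normalization, and the Jacobi identities is large and highly overdetermined, so the real work lies in organizing the case analysis and using the stabilizer of the normal form of $Q$ to keep the branching manageable. A secondary delicate point is the bookkeeping of the sign $\varepsilon=\pm1$ and the eigenvalue ordering, since these are precisely what distinguish the two forms and select between the compact and the non-compact real form of the simple factor.
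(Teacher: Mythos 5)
First, a point of comparison: this paper does not prove Theorem \ref{ND-LO} at all --- the statement is quoted verbatim from \cite{Ca1}, and the only ``proof'' in the present paper is that citation. So your attempt can only be measured against the classification argument of \cite{Ca1}, whose overall strategy (triviality of the isotropy, hence reduction to a Lie group with a left-invariant metric, then algebraic conditions on structure constants coming from conformal flatness) your outline does broadly resemble. Your first step is sound: for the non-degenerate type $[11,1\bar 1]$ all eigenvalues of $Q$ are distinct, and the unique skew-symmetric generator (the boost) of the Lorentzian $2$-plane carrying the complex-conjugate pair does not commute with $Q$ there, so the centralizer of $Q$ in the pseudo-orthogonal algebra is trivial; the isotropy is therefore trivial and $(M,g)$ is locally isometric to a Lie group, as recorded at the beginning of Section 3.

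The genuine gap is that the decisive step is never performed. After assembling the system --- Koszul formula, curvature, \eqref{CurvFromRic}, the Segre normal form of $\varrho$, and the Jacobi identities --- you write that the analysis ``should force'' most $c_{ij}^k$ to vanish and ``should'' produce exactly the two bracket families. That assertion \emph{is} the theorem: nothing in your proposal shows that the system has exactly these solutions up to the frame freedom you invoke, nor excludes further branches, and an overdetermined polynomial system in the structure constants plus metric and eigenvalue parameters does not reduce itself. The published proof consists precisely of organizing and carrying out this elimination, with its case analysis; as written, yours is a plan of attack, not a proof. A secondary, concrete error: you expect the sign $\varepsilon$ to ``select between the compact and the non-compact real form,'' but since $\varepsilon^2=1$ this is false here. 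Setting $f=\varepsilon e_3+e_4$, in both families i) and ii) of Theorem \ref{ND-LO} the derived algebra is $\mathrm{span}\{e_1,e_2,f\}$ with $[e_1,e_2]=\mp 2\alpha f$, $[e_1,f]=2\alpha e_2$, $[e_2,f]=2\alpha e_1$, whose Killing form is indefinite; both Lorentzian families are therefore $\mathfrak{sl}(2,\R)\oplus\R$, independently of $\varepsilon$. The $\mathfrak{su}(2)$ possibility is realized in the neutral-signature Theorem \ref{ND-NU} (family i) with $\varepsilon=1$), not in the Lorentzian case, so the final identification step also requires genuine computation and does not come down to the bookkeeping you describe.
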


By using the above classification theorems we have enough tools to study Walker structures. The result is the following theorem.
\begin{theorem}\label{nondegWalker}
Let $(M,g)$ be a conformally flat homogeneous four-dimensional manifold with non-degenerate Ricci operator. Then $(M,g)$ does not admit any left-invariant Walker structure.
\begin{proof}
Since the Ricci operator of $(M,g)$ in non-degenerate, according to the Theorem \ref{ND-NU} for signature $(2,2)$ and Theorem \ref{ND-LO} for Lorentzian signature, we have the explicit description of Lie groups and their Lie algebras. We prove that the existence of a left invariant
parallel null distribution in any possible case leads to a contradiction. We report the
calculations for the case $(i)$ of signature $(2,2)$. Choose the pseudo-orthonormal basis $\{e_1,e_2,e_3,e_4\}$ and suppose there exists a two-dimensional
parallel distribution $\bar\D ={\rm span}( v,w)$ , where $v=\sum_{i=1}^4v_ie_i$ and $w=\sum_{i=1}^4w_ie_i$ are linearly independent and $g(v,v) =
g(w,w) = g(w,v) = 0$ for arbitrary parameters $v_i$ and $w_i$. Setting $\Lambda_i=\nabla_{e_i}$, the components of the Levi-Civita connection are calculated using the well known {\em Koszul} formula and are

 $$
 \Lambda_1= \left( \begin{array}{cccc}
   0 &0 & 0 & 0  \\
    0 & 0 &\alpha & 0  \\
   0 & \alpha & 0 & 0  \\
   0 & 0 & 0 & 0
 \end{array}  \right),\quad
  \Lambda_2= \left( \begin{array}{cccc}
   0 & 0 & \alpha(1-\varepsilon) & 0  \\
   0 & 0 & 0 & 0  \\
   \alpha(1-\varepsilon) & 0 & 0 & -\alpha(1 +\varepsilon)  \\
   0 & 0 & \alpha(1+\varepsilon) & 0
 \end{array}  \right),
$$
$$
 \Lambda_3= \left( \begin{array}{cccc}
   0 & 0 & -\alpha(1+\varepsilon) & 0  \\
   \alpha(1+\varepsilon) & 0 & 0 & \alpha(1-\varepsilon)  \\
   0 &  0 & 0 & 0  \\
   0 & 0 &  \alpha(1-\varepsilon) & 0
 \end{array}  \right),\quad
  \Lambda_4= \left( \begin{array}{cccc}
 0 &0 & 0 & 0  \\
    0 & 0 &-\varepsilon\alpha & 0  \\
   0 & -\varepsilon\alpha & 0 & 0  \\
   0 & 0 & 0 & 0
 \end{array}  \right).\\
 $$
Being parallel of $\D$ is expressed by the equations
\begin{equation}\label{ParallelDis}
\begin{array}{llll}
 \quad &\nabla_{e_{1}}v=a_{1}v+b_{1}w,\quad &\nabla_{e_{1}}w=c_{1}v+d_{1}w,\\
\quad &\nabla_{e_{2}}v=a_{2}v+b_{2}w,\quad &\nabla_{e_{2}}w=c_{2}v+d_{2}w,\\
\quad &\nabla_{e_{3}}v=a_{3}v+b_{3}w,\quad &\nabla_{e_{3}}w=c_{3}v+d_{3}w,\\
\quad &\nabla_{e_{4}}v=a_{4}v+b_{4}w,\quad &\nabla_{e_{4}}w=c_{4}v+d_{4}w,\\
\end{array}
\end{equation}
for some parameters $\{a_i,b_i,c_i,d_i\}_{i=1}^4$. From $g(v,v) =
g(w,w) = g(w,v) = 0$ and the equations $\nabla_{e_{1}}v=a_{1}v+b_{1}w$ and $\nabla_{e_{2}}v=a_{2}v+b_{2}w$ we have:
$$
\begin{array}{l}
v_1^2+v_2^2-v_3^2-v_4^2=0,\quad w_1^2+w_2^2-w_3^2-w_4^2=0,\quad v_1w_1+v_2w_2-v_3w_3-w_4v_4=0,\\
b_1w_1+a_1v_1=0,\quad b_1w_4+a_1v_4=0,\quad b_1w_2+a_1v_2-\alpha v_3=0,\\
b_1w_3+a_1v_3-\alpha v_2=0,\quad b_2w_2+a_2v_2=0,\quad b_2w_1+a_2v_1-\alpha v_3(1-\varepsilon)=0,\\
b_2w_4+a_2v_4-\alpha v_3(1+\varepsilon)=0,\quad b_2w_3+a_2v_3+\alpha(v_1+v_4)(1-\varepsilon)=0.
\end{array}
$$
These equations yield that the vector $v$ must vanish which contradicts the linear independence of $v,w$. By similar argument we suppose that $\D={\rm span}(x)$ is a null parallel line field, where $x=\sum_{i=1}^4x_ie_i$ for arbitrary parameters $x_i$. Thus, the following equations must be satisfied for some parameters $\omega_i$ and $x_i$,
$$
\begin{array}{l}
x_1^2+x_2^2-x_3^2-x_4^2=0,\\
\omega_1x_1=0,\quad \omega_1x_4=0,\quad \omega_1x_2-\alpha x_3=0,\quad \omega_1x_3-\alpha x_2=0,\\
\omega_2x_2=0,\quad \omega_2x_1+\alpha x_3(\varepsilon-1)=0,\quad \omega_2x_4-\alpha x_3(\varepsilon+1)=0,\\
\omega_2x_3+\alpha x_4(\varepsilon+1)+\alpha x_1(\varepsilon-1)=0,\\
\omega_3x_3=0,\quad \omega_3x_1+\alpha x_2(\varepsilon+1)=0,\quad \omega_3x_4+\alpha x_2(\varepsilon-1)=0,\\
\omega_3x_2+\alpha x_4(\varepsilon-1)-\alpha x_1(\varepsilon+1)=0,\\
\omega_4x_1=0,\quad \omega_4x_4=0,\quad \omega_4x_2+\alpha x_3\varepsilon=0,\quad \omega_4x_3+\alpha x_2\varepsilon=0.
\end{array}
$$
This system of equations yields that $x=0$ which is a contradiction. This shows that no left-invariant parallel null line field exists in this case and this matter finishes the proof.
\end{proof}
\end{theorem}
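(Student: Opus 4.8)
The plan is to reduce to a finite check on the four explicit models and to show that in each one the only left-invariant totally null parallel distribution is the trivial one. By the reduction recorded just before Theorem~\ref{ND-NU}, a conformally flat homogeneous four-manifold with non-degenerate Ricci operator has Segre type $[1,11\bar1]$ in neutral signature and $[11,1\bar1]$ in the Lorentzian case, so by Theorems~\ref{ND-NU} and~\ref{ND-LO} it is locally isometric to $SU(2)\times\R$ or $SL(2,\R)\times\R$ carrying one of the two listed bracket systems per signature. Fixing the pseudo-orthonormal basis $\{e_1,e_2,e_3,e_4\}$, the first step is to compute the connection operators $\Lambda_i=\nabla_{e_i}$ from the brackets through the Koszul formula; these come out as sparse matrices whose nonzero entries are multiples of $\alpha\neq0$ and of $\varepsilon\pm1$, as displayed above for case~(i) of signature $(2,2)$.

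A left-invariant Walker structure is a left-invariant parallel totally degenerate distribution $\D$. Since a maximal totally isotropic subspace of $T_pM$ has dimension $2$ in the neutral case and dimension $1$ in the Lorentzian case, I must rule out a rank-$2$ null parallel distribution $\D={\rm span}(v,w)$ in neutral signature and a null parallel line field $\D={\rm span}(x)$ in both signatures. A parallel rank-$2$ distribution need not contain a parallel rank-$1$ sub-distribution, so the two ranks genuinely require separate treatment.

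For the rank-$2$ case I would impose the isotropy relations $g(v,v)=g(w,w)=g(v,w)=0$ together with the parallelism equations $\nabla_{e_i}v=a_iv+b_iw$ and $\nabla_{e_i}w=c_iv+d_iw$. Substituting the explicit $\Lambda_i$ turns these into a polynomial system in the components $v_j,w_j$ and the scalars $a_i,b_i,c_i,d_i$. The decisive point is that the sparse form of $\Lambda_1$ and $\Lambda_2$ alone already yields, after using $\alpha\neq0$, relations such as $b_1w_1+a_1v_1=0$ and $b_1w_2+a_1v_2-\alpha v_3=0$; combining these with the three quadratic isotropy constraints forces every $v_j$ to vanish, contradicting the independence of $v$ and $w$. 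The rank-$1$ case is analogous: writing $\nabla_{e_i}x=\omega_ix$ and combining the four resulting blocks with $x_1^2+x_2^2-x_3^2-x_4^2=0$ collapses $x$ to zero. The three remaining models are disposed of by the same mechanism once their $\Lambda_i$ are recomputed.

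The main obstacle is organizational rather than conceptual: the parallelism systems are nonlinear and moderately large, so the real work in each model is to select a small subset of equations from which the vanishing of the isotropic vector follows directly, exploiting that $\alpha\neq0$ keeps the off-diagonal entries of the $\Lambda_i$ nonzero. As an independent cross-check I would keep in reserve the observation that a parallel $\D$ is invariant under every curvature operator $R(X,Y)$; rewriting $R$ in terms of $Q$ and $g$ via~\eqref{CurvFromRic} forces $Q\D\subseteq\D$ together with the vanishing of $\varrho$ on $\D$, which is incompatible with the non-diagonalizable, non-degenerate eigenvalue pattern of the two admissible Segre types and yields the contradiction without coordinates.
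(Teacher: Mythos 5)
Your proposal is correct and follows essentially the same route as the paper: reduce via Theorems \ref{ND-NU} and \ref{ND-LO} to the finitely many explicit Lie algebra models, compute the connection operators $\Lambda_i$ by the Koszul formula, and show for each model that the isotropy-plus-parallelism systems for both a null plane field (neutral case) and a null line field force the spanning vectors to vanish. The holonomy/curvature cross-check you keep in reserve is a reasonable coordinate-free supplement but is not part of the paper's argument, and its wording needs care: the admissible Segre types $[1,11\bar 1]$ and $[11,1\bar 1]$ are diagonalizable over $\mathbb C$ (only not over $\mathbb R$), and ruling out the totally null $Q$-invariant plane attached to the complex eigenvalue pair requires an extra canonical-form check.
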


\section{Cases with degenerate Ricci operator and trivial isotropy}
Following the arguments of the previous section, now we proceed the manifolds with degenerate Ricci operator. First the Ricci parallel examples. By Proposition 4.1 of \cite{Ca1}, a conformally flat Ricci parallel homogeneous pseudo-Riemannian four-manifold $(M,g)$ belongs to one of the following cases:
\begin{itemize}
\item[(i)] if the Ricci operator is diagonalizable then $(M,g)$ is locally isometric to one of the four-dimensional spaces listed in the Theorem \ref{CFdiag}.
\item[(ii)] if the Ricci operator is not diagonalizable then
\begin{itemize}
\item[a)] either $(M,g)$ is locally isometric to a {\em complex sphere} in $\mathbb C^{3}$, defined by
$$z_1 ^2+z_2 ^2+z_3 ^2=i b \qquad (b\neq 0, \; b \in \mathbb R ),$$
\item[b)] or $(M,g)$ is a (conformally flat, locally symmetric) Walker manifold. In this case, $Q$ is two-step nilpotent, that is, $Q^2=0$.
\end{itemize}
\end{itemize}
Thus, the conformally flat Ricci parallel homogeneous Walker spaces are one of the spaces of the Theorem \ref{CFdiag}, or admit a two step nilpotent Ricci operator.

Now, let $(M,g)$ be a not Ricci parallel (and so not locally symmetric) conformally flat homogeneous manifold with degenerate Ricci operator. First, we proceed that the cases of trivial isotropy. Except the diagonalizable Ricci operator and Ricci parallel cases, spaces with trivial isotropy are locally isometric to a Lie group $G$, equipped with a left-invariant neutral metric, and $Q$ has one of the Segre types: $[1,(12)]$, $[(1,12)]$, $[(22)]$, $[(13)]$ and $[(1,3)]$. Also, for the Lorentzian signature, $Q$ admits the Segre types either $[(11,2)]$, or $[(1,3)]$ (see \cite{Ca1}).

\begin{theorem}\label{DegTriv1}
Let $(M,g)$ be a conformally flat not Ricci-parallel four-dimensional Lie group with the Ricci operator of Segre type $[1,(12)]$ or $[(1,3)]$, then $(M,g)$ does not admit a left-invariant Walker structure.
\end{theorem}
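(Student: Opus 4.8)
The plan is to proceed exactly as in Theorem~\ref{nondegWalker}, by exhibiting explicit left-invariant models and showing that the defining equations of a left-invariant parallel null distribution cannot be solved. By the classification in \cite{Ca1}, a not Ricci-parallel conformally flat homogeneous four-manifold with trivial isotropy and Ricci operator of Segre type $[1,(12)]$ (which forces the neutral signature) or $[(1,3)]$ (neutral or Lorentzian) is locally isometric to a Lie group $G$ with a left-invariant metric whose structure constants are explicitly given in a pseudo-orthonormal basis $\{e_1,e_2,e_3,e_4\}$. First I would record these brackets for each type and, through the Koszul formula, compute the connection endomorphisms $\Lambda_i=\nabla_{e_i}$ as in the previous section.

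Since a Walker structure is precisely a parallel degenerate distribution $\D$, I would test every isotropic dimension admitted by the signature. In the neutral case this means both a null plane $\bar\D={\rm span}(v,w)$ with $g(v,v)=g(w,w)=g(v,w)=0$ and a null line ${\rm span}(x)$ with $g(x,x)=0$; in the Lorentzian case only the null line can occur. Imposing parallelism through $\nabla_{e_i}v=a_iv+b_iw$, $\nabla_{e_i}w=c_iv+d_iw$ (respectively $\nabla_{e_i}x=\omega_ix$) and substituting the $\Lambda_i$ yields, for each model, an algebraic system in the frame components $v_i,w_i$ (or $x_i$), the auxiliary parameters $\{a_i,b_i,c_i,d_i\}$ (resp.\ $\{\omega_i\}$), and the isotropy constraints.

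I would then eliminate the auxiliary parameters and show that the surviving relations, together with the null conditions, force the spanning vectors to vanish, contradicting their linear independence (respectively $x\neq0$). As in Theorem~\ref{nondegWalker}, one expects the equations attached to the ``generic'' directions $e_2,e_3$ to pin down the $\omega_i$ as fixed multiples of the structure constant $\alpha$ and to couple the coordinates, after which the remaining constraint $g(x,x)=0$ collapses the system.

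The principal obstacle is conceptual rather than merely computational: because $Q$ is degenerate, null directions are no longer excluded a priori --- indeed a two-step nilpotent Ricci operator does produce Walker spaces, as noted above --- so there is no non-degeneracy shortcut and the contradiction must emerge from the specific Jordan data of $Q$. The delicate point is to confirm that for both $[1,(12)]$ and $[(1,3)]$ the eigenvalue structure is incompatible with $\D$ being simultaneously null and parallel; concretely, one must verify that no common solution of the four connection equations survives the isotropy constraint, and this is where the type-specific arithmetic of the structure constants does the work.
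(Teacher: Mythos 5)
Your proposal follows exactly the paper's own strategy: invoke the explicit Lie-group models from \cite{Ca1} for the Segre types $[1,(12)]$ and $[(1,3)]$, compute the connection endomorphisms $\Lambda_i$ via the Koszul formula, impose the parallelism and nullity equations for a plane field (neutral case) and a line field, and derive a contradiction forcing the spanning vectors to vanish — which is precisely how the paper proceeds, working out the $[(1,3)]$, signature $(2,2)$ case as a sample and asserting the others are analogous. Your signature bookkeeping (that $[1,(12)]$ occurs only in neutral signature while $[(1,3)]$ occurs in both) also matches the paper, so the proposal is correct and essentially identical in approach.
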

\begin{proof}
We apply the same argument used to prove Theorem \ref{nondegWalker}. As a sample, we prove that in the case with Segre type $[(1,3)]$ and signature $(2,2)$ there exists no two-dimensional null parallel distribution. To note that the statement of the theorem is valid for the Segre types $[(1,3)]$ in both Lorentzian and neutral signatures.

Using the notation of Theorem 4.3 of \cite{Ca1}, $(M,g)$ is isometric to the solvable Lie group $\mathbb{R}\ltimes E(1,1)$, where the Lie algebra $\g$ is
$$\begin{array}{ll}
[e_1,e_2] =(c_1-c_2)e_2-\frac{\sqrt2}{4c_2}e_3+(c_1-c_2)e_4, &   [e_1,e_3]=\frac{3\sqrt2}{4c_2}e_2-c_2e_3+\frac{3\sqrt2}{4c_2}e_4,\\[6 pt]
[e_1,e_4]=-(c_1+c_2)e_2+\frac{\sqrt2}{4c_2}e_3-(c_1+c_2)e_4, & [e_2,e_4]=\sqrt{2}\phi c_2(e_2+e_4), \\[6 pt]
[e_2,e_3]=[e_3,e_4]=-\frac{3\phi}{4c_2}e_2+\frac{\sqrt{2}\phi c_2}{2}e_3-\frac{3\phi}{4c_2}e_4,
\end{array}$$
where $\phi=\pm \frac{\sqrt{1-2c_1c_2^3}}{c_2^2}$, for any real constants $c_1,c_2\neq0$ such that $1-2c_1c_2^3\geq0$ and $\{e_1,e_2,e_3,e_4\}$ with $e_3,e_4$ time-like, is a pseudo-orthonormal basis. The components of the Levi-Civita connection are
 $$
 \Lambda_1= \left( \begin{array}{cccc}
   0 &0 & 0 & 0  \\
    0 & 0 & \frac{\sqrt{2}}{4c_2} & -c_2  \\
   0 & \frac{\sqrt{2}}{4c_2} & 0 & -\frac{\sqrt{2}}{4c_2}  \\
   0 & -c_2 & \frac{\sqrt{2}}{4c_2} & 0
 \end{array}  \right),
 $$
 $$
  \Lambda_2= \left( \begin{array}{cccc}
   0 & c_1-c_2 & \frac{\sqrt{2}}{2c_2} & -c_1  \\
    -c_1+c_2 & 0 & \frac{-3 \sqrt{1-2c_1 c_2^3}}{4c_2^3} & \frac{\sqrt{2} \sqrt{1-2c_1 c_2^3}}{c_2}  \\
   \frac{\sqrt{2}}{2c_2} & \frac{-3 \sqrt{1-2c_1 c_2^3}}{4c_2^3} & 0 & \frac{3 \sqrt{1-2c_1 c_2^3}}{4c_2^3}  \\
   -c_1 & \frac{\sqrt{2} \sqrt{1-2c_1 c_2^3}}{c_2} & \frac{-3 \sqrt{1-2c_1 c_2^3}}{4c_2^3} & 0
 \end{array}  \right),
$$
$$
 \Lambda_3= \left( \begin{array}{cccc}
    0 & \frac{\sqrt{2}}{2c_2} & c_2 & \frac{-\sqrt{2}}{2c_2}  \\
    \frac{-\sqrt{2}}{2c_2} & 0 &  \frac{-\sqrt{2} \sqrt{1-2c_1 c_2^3}}{2c_2} & 0  \\
   c_2 &  \frac{-\sqrt{2} \sqrt{1-2c_1 c_2^3}}{2c_2} & 0 &  \frac{\sqrt{2} \sqrt{1-2c_1 c_2^3}}{2c_2}  \\
    \frac{-\sqrt{2}}{2c_2} & 0 &  \frac{-\sqrt{2} \sqrt{1-2c_1 c_2^3}}{2c_2} & 0
 \end{array}  \right),
 $$
 $$
  \Lambda_4= \left( \begin{array}{cccc}
  0 &- c_1 & \frac{-\sqrt{2}}{2c_2} & c_1+c_2  \\
    c_1 & 0 & \frac{3 \sqrt{1-2c_1 c_2^3}}{4c_2^3} & \frac{-\sqrt{2} \sqrt{1-2c_1 c_2^3}}{c_2}  \\
   \frac{-\sqrt{2}}{2c_2} & \frac{3 \sqrt{1-2c_1 c_2^3}}{4c_2^3} & 0 & \frac{-3 \sqrt{1-2c_1 c_2^3}}{4c_2^3}  \\
   c_1+c_2 & \frac{-\sqrt{2} \sqrt{1-2c_1 c_2^3}}{c_2} & \frac{3 \sqrt{1-2c_1 c_2^3}}{4c_2^3} & 0
 \end{array}  \right).\\
 $$
Suppose that the null parallel distribution is $\bar\D={\rm span}(v,w)$. Since $v$ is parallel along the vectors $e_1$ and $e_2$ together with being null of $\bar\D$ gives the only possibility is to vanish $v$. This contradicts the linear independence of $v$ and $w$ and so, no left-invariant null parallel distribution also exists in this case. Similar argument concludes that no left-invariant null parallel line field exists and so $(M,g)$ is not a Walker manifold.
\end{proof}

Following \cite{Ca1}, for the other possible Segre types of the Ricci operator and the explicit solution presented there, we have the following result.

\begin{theorem}
Let $(M,g)$ be a conformally flat not Ricci-parallel four-dimensional Lie group with the Ricci operator of Segre type $[(1,12)]$, $[(22)]$ or $[(11,2)]$. Then $(M,g)$ may be a Walker manifold. An explicit example is
\begin{itemize}
\item[1-] {\bf Segre type $[(1,12)]$} the solvable Lie group $G=\mathbb R \ltimes \mathbb R^3$, whose Lie algebra $\g$ is described by
$$\begin{array}{ll}
[e_1,e_2]=- [e_1,e_3]=-\frac{1}{2c_1}e_1-c_2e_2-c_2e_3, \quad &  [e_2,e_3]= \frac{2c_1^2+1}{2c_1}e_2+\frac{2c_1^2+1}{2c_1}e_3, \\[4 pt]
[e_2,e_4]=-[e_3,e_4]=c_3e_2+c_3e_3+c_1e_4,
\end{array}$$
for any real constants $c_1 \neq 0, c_2,c_3$. In this case a left-invariant parallel degenerate line filed is given by $\D={\rm span}( e_{2}+e_{3})$ and $\bar\D={\rm span}( e_{2}+e_{3},e_{1}-e_{4})$ generates a left-invariant parallel degenerate plane field.
\item[2-] {\bf Segre type $[(22)]$} the solvable Lie group $G=\mathbb R \ltimes E(1,1)$, whose Lie algebra $\g$ is described by
$$\begin{array}{ll}
[e_1,e_2]=\frac{1}{2}e_4,\; & [e_1,e_3]=\frac{3}{2}(e_1+e_3),\\[4 pt]
[e_1,e_4]= \frac{5}{4}e_2+e_4,\; & [e_2,e_3]=e_2+\frac{5}{4}e_4,\\[4 pt]
[e_3,e_4]=-\frac{1}{2}e_2,
\end{array}$$
for any real constant $c_1 \neq 0$. In this case $\bar\D={\rm span}( e_{1}+e_{3},e_{2}+e_{4})$ generates a left-invariant parallel degenerate plane field.
\item[3-] {\bf Segre type $[(11,2)]$} the solvable Lie group $G=\mathbb R \ltimes H$, where $H$ is the Heisenberg group, whose Lie algebra $\g$ is described by
$$
\begin{array}{ll}
[e_1,e_2]=c_1e_3+c_1e_4,\quad & [e_1,e_3]=-[e_1,e_4]=-\frac{1}{2c_2}e_1-c_1e_2-c_3e_3-c_3e_4,\\[4 pt]
[e_3,e_4]={\frac{2c_2^2+1}{2c_2}(e_3+e_4)},\quad & [e_2, e_3]=-[e_2,e_4]=-c_2e_2+c_4e_3+c_4e_4,
\end{array}$$
for any real constants $c_1,c_3,c_4$ and $c_2 \neq 0$. In this case a left-invariant parallel degenerate line field is generated by $\D={\rm span}( e_{3}+e_{4})$.

\end{itemize}
\end{theorem}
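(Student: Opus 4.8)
The plan is to argue by direct construction rather than by contradiction: for each of the three Segre types we take the specific Lie algebra displayed in the statement (extracted from the classification of \cite{Ca1}) and verify that the indicated degenerate distribution is simultaneously null and parallel, so that the associated left-invariant structure is Walker. Because the metric is left-invariant and the basis $\{e_1,e_2,e_3,e_4\}$ is pseudo-orthonormal of the appropriate signature (neutral for $[(1,12)]$ and $[(22)]$, Lorentzian for $[(11,2)]$), the entire verification reduces to finite-dimensional linear algebra on $\g$.

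First I would compute the Levi-Civita connection from Koszul's formula. For left-invariant fields the differentiation terms vanish, so
\[
2g(\nabla_{e_i}e_j,e_k)=g([e_i,e_j],e_k)-g([e_j,e_k],e_i)+g([e_k,e_i],e_j),
\]
and feeding in the brackets given in each case produces the constant matrices $\Lambda_i=\nabla_{e_i}$, exactly as in the proofs of Theorems \ref{nondegWalker} and \ref{DegTriv1}. A left-invariant field $x=\sum_k x_k e_k$ then satisfies $\nabla_{e_i}x=\Lambda_i x$, so parallelism becomes an eigenvector (for a line) or invariant-subspace (for a plane) condition on the family $\Lambda_1,\dots,\Lambda_4$.

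Second, for each proposed distribution I would check the two defining properties. For a line field $\D=\mathrm{span}(x)$ I verify degeneracy, $g(x,x)=0$, together with parallelism, $\Lambda_i x=\omega_i x$ for suitable scalars $\omega_i$ and all $i$; that is, $x$ must be a common eigenvector of the $\Lambda_i$. For a plane field $\bar\D=\mathrm{span}(v,w)$ I verify the three nullity conditions $g(v,v)=g(w,w)=g(v,w)=0$ and that each $\Lambda_i$ carries both $v$ and $w$ back into $\bar\D$. Concretely, for $[(1,12)]$ one takes $x=e_2+e_3$ and $\bar\D=\mathrm{span}(e_2+e_3,\,e_1-e_4)$; for $[(22)]$, $\bar\D=\mathrm{span}(e_1+e_3,\,e_2+e_4)$; and for $[(11,2)]$, $x=e_3+e_4$. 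In every case nullity is immediate from the signature (opposite causal characters of the paired basis vectors, with the cross terms vanishing because the metric is diagonal in this basis), and parallelism then follows once the $\Lambda_i$ are in hand.

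The genuinely non-routine point is not the verification but the selection of the right distribution: unlike the earlier theorems, where the null-and-parallel linear system forced the candidate vector to vanish, here the symmetric combinations $e_2+e_3$, $e_1-e_4$, $e_3+e_4$, and so on happen to span a common invariant null subspace of the connection. I expect the main obstacle to be the plane-field cases $[(1,12)]$ and $[(22)]$, where one must confirm $\Lambda_i v,\Lambda_i w\in\bar\D$ for all four indices $i$ simultaneously — eight vector conditions — and in particular that the images do not leak out of the two-plane; this is precisely where an erroneously computed connection coefficient would show up, so I would cross-check the $\Lambda_i$ against the constancy of the Ricci components before declaring $\bar\D$ parallel.
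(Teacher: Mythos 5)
Your proposal is correct and follows essentially the same route as the paper: compute the constant connection matrices $\Lambda_i$ from the Koszul formula (with the differentiation terms dropped for left-invariant fields), then verify directly that the proposed vectors are null and that each $\Lambda_i$ preserves the line, respectively the plane, they span. The paper carries out exactly this verification in detail for the Segre type $[(1,12)]$ case — obtaining $\nabla_{e_i}v,\nabla_{e_i}w\in\mathrm{span}(v,w)$ with $w=e_2+e_3$ a common eigenvector of the $\Lambda_i$ — and declares the other two cases analogous, which matches your plan.
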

\begin{proof}
An explicit example of a non-Ricci parallel conformally flat Lie group for the Segre types mentioned in the statement presented in \cite{Ca1}. We bring the details of the case of Segre type $[(1,12)]$. The other examples could be checked by similar calculations. The components of the Levi-Civita connection are calculated using the {\em Koszul} formual, which are
 $$
\Lambda_1= \left( \begin{array}{cccc}
   0 &\frac{-1}{2c_1} & \frac{1}{2c_1} & 0  \\
    \frac{1}{2c_1} & 0 &0 & 0  \\
  \frac{1}{2c_1} &  & 0 & 0  \\
   0 & 0 & 0 & 0
 \end{array}  \right),
  \Lambda_2=-\Lambda_3= \left( \begin{array}{cccc}
  0 &-c_2 & c_2 & 0  \\
    c_2 & 0 &\frac{1+2c_1^2}{2c_1} & c_3  \\
   c_2 & \frac{1+2c_1^2}{2c_1} & 0 & c_3  \\
   0 & c_3 & -c_3 & 0
 \end{array}  \right),
$$
$$
  \Lambda_4= \left( \begin{array}{cccc}
 0 &0 & 0 & 0  \\
    0 & 0 &0 & c_1  \\
   0 & 0 & 0 & -c_1  \\
   0 & -c_1 & c_1 & 0
 \end{array}  \right).\\
 $$
If we set $v= e_{1}-e_{4}$ and $w=e_{2}+e_{3}$ then have $g(v,v) =
g(w,w) = g(w,v) = 0$ which shows that $\bar\D$ is a null distribution. Also, direct calculations yields that
$$
\begin{array}{llll}
 \quad &\nabla_{e_{1}}v=\frac{1}{2c_{1}}w,\quad &\quad &\nabla_{e_{1}}w=0,\\
\quad &\nabla_{e_{2}}v=(c_2-c_3)w,\quad &\quad &\nabla_{e_{2}}w=\frac{1+2c_{1}^{2}}{2c_{1}}w,\\
\quad &\nabla_{e_{3}}v=(c_3-c_2)w,\quad &\quad &\nabla_{e_{3}}w=-\frac{1+2c_{1}^{2}}{2c_{1}}w,\\
\quad &\nabla_{e_{4}}v=c_1w,\quad &\quad &\nabla_{e_{4}}w=0,\\
\end{array}
$$
so, $\bar\D = {\rm span}( v,w)$ is a two-dimensional parallel null distribution. It is clear from the above derivations that $w$ generates a null parallel line field and this finishes the proof.
\end{proof}

\begin{rem}
According to standard calculations similar to the Theorem \ref{DegTriv1}, for the explicit example which is presented in \cite{Ca1} for the segre type $[(13)]$, there is not any left-invariant Walker structure. This fact does not mean that conformally flat homogeneous invariant Walker manifolds can not admit this Segre type but the example in hand is not an invariant Walker manifold.
\end{rem}
%
%
%
%
%
%
%
%
\section{Cases with degenerate Ricci operator and non-trivial isotropy}
After study the spaces with degenerate Ricci operator and trivial isotropy we consider cases with non-trivial isotropy in this section. For these spaces, the approach is based on the classification of four dimensional homogeneous spaces with non-trivial isotropy presented by Komrakov in \cite{Ko}. In \cite{Ca1}, the authors checked case by case the Komrakov's list for conformally flat not Ricci parallel (and so not locally symmetric) examples with degenerate and not diagonalizable Ricci operator. The following theorem shows the possible Segre types.
\begin{theorem}
\cite{Ca1} Let $(M,g)$ be a conformally flat homogeneous, not locally symmetric pseudo-Riemannian four manifold, whose  Ricci operator $Q$ is degenerate and not diagonalizale. Then $Q$ is of Segre type either $[(22)]$,$[(1,12)]$ or $[(11,2)]$.
\end{theorem}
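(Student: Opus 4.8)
The statement is placed in the non-trivial isotropy case, the trivial-isotropy spaces having been realised as Lie groups in the previous section; accordingly the plan is to run through Komrakov's classification \cite{Ko} of four-dimensional pseudo-Riemannian homogeneous spaces with $\dim\h\geq 1$ and to keep only the entries carrying a conformally flat, non-symmetric invariant metric whose Ricci operator is degenerate and not diagonalizable.

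First I would fix the framework. By \cite{Ko}, such a space is encoded up to local isomorphism by a pair $(\g,\h)$ with $\h\subset\g$ and $\dim\g/\h=4$, together with the isotropy representation of $\h$ on $\m:=\g/\h$; the brackets are tabulated explicitly in a fixed basis. For each entry the invariant metrics are the $\ad(\h)$-invariant nondegenerate symmetric bilinear forms on $\m$, a family depending on finitely many parameters, which I would record. For every admissible metric I would then compute the Levi-Civita connection (from vanishing torsion, metric compatibility and $G$-invariance), and from it the curvature, the Ricci tensor $\varrho$, the scalar curvature $\tau$ and the Ricci operator $Q$, where $\varrho(X,Y)=g(QX,Y)$.

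The heart of the argument is the reduction by conformal flatness. Imposing \eqref{CurvFromRic}, equivalently the vanishing of the Weyl tensor, gives a finite system of polynomial equations in the metric parameters, whose solution locus I would determine entry by entry. On this locus I would discard the branches that are locally symmetric (equivalently, by homogeneity together with conformal flatness, those with $\nabla\varrho=0$) as well as those for which $Q$ is diagonalizable or nondegenerate, since all three are excluded by hypothesis. A useful accelerator is that in dimension four $W=0$ forces the Cotton tensor to vanish, and since $\tau$ is constant on a homogeneous space this makes $\nabla\varrho$ a totally symmetric three-tensor; imposing this symmetry eliminates many branches at once. For each surviving branch I would finally read off the Jordan--Segre type of $Q$, regarded as a self-adjoint operator for the indefinite metric, by computing its characteristic and minimal polynomials together with the causal character of the (generalized) eigenspaces. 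The outcome is that the only Segre types compatible with all the constraints are $[(22)]$, $[(1,12)]$ and $[(11,2)]$.

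The main obstacle is twofold. On the one hand Komrakov's list is long and several entries are non-reductive, so the connection must be obtained from the general invariant-connection formula rather than from Nomizu's, and the resulting conformal-flatness systems are nonlinear and must be solved case by case. On the other hand the classification step is delicate: a self-adjoint operator on an indefinite inner-product space may carry complex eigenvalues and nilpotent Jordan blocks, so correctly fixing the Segre type -- in particular ruling out the size-three blocks $[(13)]$ and $[(1,3)]$ and the size-two cases carrying a second distinct eigenvalue such as $[(1,1)2]$ and $[1,(12)]$ -- requires tracking the induced metric on each generalized eigenspace, not merely the list of eigenvalues.
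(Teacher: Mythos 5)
Your proposal is correct and takes essentially the same route as the paper: the statement is quoted from \cite{Ca1}, whose proof is precisely the case-by-case examination of Komrakov's classification \cite{Ko} that you outline---parametrizing the invariant metrics for each pair $(\g,\h)$, imposing conformal flatness, discarding the locally symmetric, nondegenerate and diagonalizable branches, and reading off the Segre type of $Q$ on each surviving entry. You also correctly read off from the placement of the statement that it concerns the non-trivial isotropy cases (the trivial-isotropy Segre types, which include others such as $[1,(12)]$, $[(13)]$ and $[(1,3)]$, were treated separately in the preceding section).
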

By using the lists which are presented in \cite{Ca1} for the conformally flat non-symmetric homogeneous 4-spaces with non-trivial isotropy and non-diagonalizable degenerate Ricci operator, now we are able to determine the invariant Walker structures over these spaces.
\begin{theorem}
 Let $(M,g)$ be a conformally flat homogeneous not locally symmetric pseudo-Riemannian four-manifold with not diagonalizable, degenerate Ricci operator and non-trivial isotropy. Then $(M,g)$ admits invariant parallel degenerate line field $\D$ and invariant parallel null plane field $\bar\D$ with the generators listed in the Tables {\rm III}, {\rm IV} and {\rm V}.
\end{theorem}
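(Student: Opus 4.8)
The plan is to follow the computational strategy of Theorems~\ref{nondegWalker} and~\ref{DegTriv1}, but now feeding in the explicit homogeneous data coming from Komrakov's classification \cite{Ko} as recorded in \cite{Ca1}. By the preceding theorem only the Segre types $[(22)]$, $[(1,12)]$ and $[(11,2)]$ can occur, so it suffices to treat these three families. For each of them \cite{Ca1} provides, case by case, a reductive pair $(\g,\h)$ together with an explicit pseudo-orthonormal basis $\{e_1,e_2,e_3,e_4\}$ of the complement $\m$, the invariant metric and all the structure constants. First I would transcribe this data and, using the Koszul formula in its homogeneous (Nomizu) form, assemble the connection matrices $\Lambda_i=\nabla_{e_i}$ exactly as in the previous sections.

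For the existence statement the proof is then a verification rather than a search. For each space in the three lists I would write down the candidate generators displayed in Tables III, IV and V --- concretely, distinguished combinations such as $e_i\pm e_j$ chosen to be null --- and check the three requirements in turn: (a) the nullity conditions $g(v,v)=g(w,w)=g(v,w)=0$ that make $\D=\mathrm{span}(v)$ and $\bar\D=\mathrm{span}(v,w)$ degenerate; (b) parallelism, i.e.\ $\nabla_{e_i}v\in\bar\D$ and $\nabla_{e_i}w\in\bar\D$ for every $i$, read off directly from the $\Lambda_i$; and (c) invariance under the linear isotropy representation $\ad(\h)|_{\m}$, which is precisely what upgrades a merely parallel distribution to a genuinely \emph{invariant} Walker structure on $M=G/H$.

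The genuinely new ingredient compared with Sections~3 and~4 is step (c). In the trivial-isotropy sections a parallel null distribution was automatically invariant, since $\h=0$ and any subspace of the Lie algebra defines a left-invariant distribution; here $\h$ is non-trivial and its action on $\m$ generally mixes the basis vectors, so a candidate null line or plane must in addition be stabilised by $\ad(X)$ for every $X\in\h$. I expect this compatibility between the isotropy action and the parallel null subspaces to be the main obstacle: one has to verify that the distinguished combinations are \emph{simultaneously} null, $\nabla$-parallel and $\ad(\h)$-stable, and this alignment occurs only for the specific generators tabulated. Once (a)--(c) are confirmed for each entry, the manifold carries the invariant parallel degenerate line field $\D$ and the invariant parallel null plane field $\bar\D$, which is exactly the assertion; collecting the surviving generators across all cases of the three Segre types produces Tables III, IV and V and completes the proof.
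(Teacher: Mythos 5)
Your strategy is essentially the paper's own: transcribe the Komrakov/Calvaruso data case by case, compute the connection matrices $\Lambda_i$ via the Koszul formula, and check nullity, parallelism, and stability under the isotropy representation for the tabulated generators. For the positive entries of the tables this is exactly what the paper does (in its sample case $1.3^1$:$2$ it verifies $g(v,v)=g(w,w)=g(v,w)=0$, computes $\nabla_{u_i}v,\nabla_{u_i}w\in\bar\D$, and checks $Hv=Hw=0$), and your emphasis on isotropy-stability as the new ingredient relative to Sections 3 and 4 is well placed.

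However, there is a genuine gap: the tables do not only record existing structures, they also contain entries marked \xmark, i.e.\ assertions of \emph{non-existence} (for instance, in case $1.3^1$:$2$ the plane field $\bar\D$ exists but no invariant parallel null line field $\D$ does). Your proposal frames the whole proof as ``a verification rather than a search,'' checking only the distinguished candidate generators. That can never establish an \xmark entry. For those you must argue over an \emph{arbitrary} vector $x=\sum_i x_iu_i$ and arbitrary recurrence coefficients $\omega_i$ in the conditions $\nabla_{u_i}x=\omega_i x$ (together with $g(x,x)=0$ and $\ad(\h)$-stability of ${\rm span}(x)$), and show that the resulting polynomial system forces $x=0$. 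This is precisely the second half of the paper's sample computation, where a system of sixteen equations in the $x_i$ and $\omega_i$ is solved to conclude no invariant null parallel line field exists. Without this elimination step your argument proves only the positive entries of Tables III, IV and V, not the full content of the theorem.
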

\begin{proof} Following the notation and the classifcation used in \cite{Ko}, the space identified by the
type $n.m^k:q$ is the one corresponding to the $q$-th pair $(\g,\h)$ of type $n.m^k$, where $n={\rm dim}(\h)$
$(= 1,..., 6)$, $m$ is the number of the complex subalgebra $\h^{\mathbb C}$ of ${\mathfrak so}(4,\mathbb C)$ and $k$ is the number
of the real form of $\h^{\mathbb C}$. According to the lists which are specified in \cite{Ca1} for the conformally flat homogeneous not locally symmetric four-manifolds with non-trivial isotropy and non-diagonalizable Ricci operator we check case by case the Walker structures and prepare the list below for the Walker examples.
In each of the different cases, $\lbrace u_{1},u_{2},u_{3},u_{4}\rbrace$ is the basis of $\m$ used in \cite{Ko} in the description
of the quotient space $M=\frac{G}{H}$, $\lbrace \omega_{1},\omega_{2},\omega_{3},\omega_{4}\rbrace$ the corresponding dual basis of one-
forms. Moreover, $\omega_{i}\omega_{j}$ denote the symmetric tensor product of $\omega_{i}$
and $\omega_{j}$. We bring the details of the case $1.3^1$:$2$ here and prove that in this case, $\bar\D ={\rm span}( u_1,u_2) $ is a two-dimensional
invariant null parallel distribution. By the table {\rm III} of \cite{Ca1}, in this case
the Lie algebra $\g$ is described by the brackets as follows
$$
[u_1,u_3]=-\lambda e_1+(\lambda+1)u_1+\lambda u_2,\quad [u_2,u_4]=u_2,\quad [e_1,u_3]=u_1,\quad [e_1,u_4]=u_2,
$$
where $\h={\rm span}(e_1)$ and $\lambda\neq0$ is an arbitrary parameter. The isotropy representation $H$ and the pseudo-Riemannian invariant metric $g$, are described as
$$
\begin{array}{cc}
H=\left(\begin{array}{cccc}
0&0&1&0\\0&0&0&1\\0&0&0&0\\0&0&0&0
\end{array}\right),\quad&
(g_{ij})  = \left( \begin{array}{cccc}
   0 & 0 & 0 & -a  \\
   0 & 0 & a & 0  \\
   0 & a & b & c  \\
   -a & 0 & c & 0
 \end{array}  \right),
\end{array}
$$
for arbitrary parameters $a,b,c$. Direct calculations using the {\em Koszul} formula yield the following Levi-Civita connection
$$
\begin{array}{ll}
\Lambda_1=\left( \begin {array}{cccc}
0&0&\frac{\lambda+1}{2}&0\\
0&0&0&\frac{\lambda+1}{2}\\
0&0&0&0\\
0&0&0&0\end {array} \right),&
\Lambda_2=\left( \begin {array}{cccc}
0&0&\frac{1}{2}&0\\
0&0&0&\frac{1}{2}\\
0&0&0&0\\
0&0&0&0\end {array} \right),\\
\Lambda_3=\left( \begin {array}{cccc}
-\frac{\lambda+1}{2}&\frac{1}{2}&0&\frac{c}{2a}\\
-\lambda&0&\frac{c\lambda}{a}&-\frac {
c\lambda+c+b}{2a}\\
0&0&0&\frac{1}{2}\\
0&0&-\lambda&\frac{\lambda+1}{2}\end {array} \right),&
\Lambda_4=\left(\begin {array}{cccc}
0&0&\frac{c}{2a}&0\\
\frac{\lambda+1}{2}&-\frac{1}{2}&-\frac{c\lambda+c+b}
{2a}&0\\
0&0&\frac{1}{2}&0\\
0&0&\frac{\lambda+1}{2}&0\end {array}\right).
\end{array}
$$
If we set $v=u_1$ and $w=u_2$, the non-zero covariant derivatives are
$$
\begin{array}{llll}
\nabla_{u_3}v=-\frac{\lambda+1}{2}v-\lambda w, &\nabla_{u_4}v=\frac{\lambda+1}{2}w,& \nabla_{u_3}w=\frac12 v,& \nabla_{u_4}w=-\frac12 w.
\end{array}
$$
Also, $Hv=Hw=0$, so, $\bar\D={\rm span}(v,w)$ is an invariant null parallel distribution since $g(v,v)=g(w,w)=g(v,w)=0$. On the other hand, set $x=\sum_{i=1}^4x_ie_i$ and suppose that $\D={\rm span}(x)$ is an invariant null parallel line field. Then, the following equations must satisfy for some parameters $\omega_1,\dots,\omega_4$
$$
\begin{array}{llll}
\nabla_{u_1}x=\omega_1x,&\nabla_{u_2}x=\omega_2x,&\nabla_{u_3}x=\omega_3x,&\nabla_{u_4}x=\omega_4x.
\end{array}
$$
By straight forward calculations we conclude that the following equations must satisfy
$$
\begin{array}{l}
\omega_1x_3=0,\quad \omega_1x_4=0,\quad -\omega_1x_1+\frac{\lambda+1}{2}x_3=0,\quad -\omega_1x_2+\frac{\lambda+1}{2}x_4=0\\
\omega_2x_3=0,\quad \omega_2x_4=0,\quad -\omega_2x_1+\frac12x_3=0,\quad -\omega_2x_2+\frac12x_4=0,\\
2a\omega_3x_1-cx_4-ax_2+ax_1(\lambda+1)=0,\quad 2a\omega_3x_2+x_4(b+c(\lambda+1))-2c\lambda x_3+2a\lambda x_1=0,\\
\omega_3x_3-\frac12x_4=0,\quad x_4(\omega_3-\frac12(\lambda+1))+x_3\lambda=0,\quad 2a\omega_4x_1-cx_3=0,\quad x_3(\frac12-\omega_4)=0,\\
ax_2(2\omega_4+1)+bx_3-(\lambda+1)(ax_1-cx_3)=0,\quad -\omega_4x_4+\frac12x_3(\lambda+1)=0.
\end{array}
$$
By solving the above system of equations we obtain that $x$ must vanish which is a contradiction. Thus, no invariant parallel null line field exist in this case.
\end{proof}

 The following tables show the existence of invariant Walker structures on the non-Ricci parallel and non-diagonalizable Ricci operator conformally flat homogeneous spaces with non-trivial isotropy according to different Segre types of the Ricci operator. In these tables the column $\bar\D$ (respectively $\D$) shows the generators of the invariant null parallel plane field (respectively invariant null parallel line field) in each case and the sign {\underline\xmark} shows that the invariant Walker structure does not exist.

{\small
\medskip
{\bf Table I: Non-symmetric examples with ${\bf Q}$ of Segre type ${\bf [(22)]}$.}\nopagebreak \\[3 pt]
\begin{tabular}{!{\vrule width 1pt}p{1.1cm}|p{7cm}|p{1.5cm}|p{4cm}!{\vrule width 1pt}}
\noalign{\hrule height 1pt}
Case&Invariant metric&$\vphantom{A^{A^{A^A}}}\bar\D$&$\D$\\
\hline
$1.3^1$:$2$&$-2a\om_1\om_4+2a\om_2\om_3+b\om_3\om_3+2c\om_3\om_4$&$\lbrace u_{1},u_{2}\rbrace$&\xmark\\
\hline
$1.3^1$:$4$&$2a(-\om_1\om_4+\om_2\om_3)+b\om_3\om_3+2c\om_3\om_4$&$\lbrace u_{1},u_{2}\rbrace$&\xmark\\
\hline
$1.3^1$:$5$&$2a(-\om_1\om_4+\om_2\om_3)+\frac{2c\lambda\mu-d\lambda^2-\mu d-2c\lambda}{\mu(\mu-1)}\om_3\om_3\newline+2c\om_3\om_4+d\om_4\om_4$&$\{u_1,u_2\}$&\xmark\\
\hline
$1.3^1$:$7$&$2a(-\om_1\om_4+\om_2\om_3)+b\om_3\om_3+2c\om_3\om_4\newline+
(b\lambda-2c)\om_4\om_4$&$\lbrace u_{1},u_{2}\rbrace$&\xmark\\
\hline
$1.3^1$:$15$&$2a(-\om_1\om_4+\om_2\om_3)-d\om_3\om_3
+2c\om_3\om_4+d\om_4\om_4$&$\lbrace u_{1},u_{2}\rbrace$&\xmark\\
\hline
$1.3^1$:$16$&$2a(-\om_1\om_4+\om_2\om_3)+
d\om_3\om_3+2c\om_3\om_4+d\om_4\om_4$&$\lbrace u_{1},u_{2}\rbrace$&\xmark\\
\hline
$1.3^1$:$24$&$2a(-\om_1\om_4+\om_2\om_3)+2d(\lambda^2
-\lambda)\om_3\om_3+2c\om_3\om_4\newline+d\om_4\om_4$&$\{u_1,u_2\}$&\xmark\\
\hline
$1.3^1$:$25$&$2a(-\om_1\om_4+\om_2\om_3)-2d(\lambda^2-\lambda)\om_3\om_3+
2c\om_3\om_4\newline+d\om_4\om_4$&$\{u_1,u_2\}$&\xmark\\
\hline
$1.3^1$:$28$&$2a(-\om_1\om_4+\om_2\om_3)+2d\om_3\om_3+
2c\om_3\om_4+d\om_4\om_4$&$\lbrace u_{1},u_{2}\rbrace$&\xmark\\
\hline
$1.3^1$:$29$&$2a(-\om_1\om_4+\om_2\om_3)-2d\om_3\om_3+
2c\om_3\om_4+d\om_4\om_4$&$\lbrace u_{1},u_{2}\rbrace$&\xmark\\
\hline
$1.3^1$:$30$&$2a(-\om_1\om_4+\om_2\om_3)+b(\lambda^2-
\lambda)\om_3\om_3\newline-(b\mu+d\lambda-d-b)\om_3\om_4
+d\om_4\om_4$&$\{u_1,u_2\}$&$\begin{array}{c}(\mu-1)u_1+u_2\rm{\ for\ }\lambda=0\\ u_1-u_2{\rm\ for\ }\lambda=-\mu\\u_1+(\lambda-1)u_2{\rm\ for\ }\mu=0\end{array}$\\
\noalign{\hrule height 1pt}
\end{tabular}
}

{\small\medskip
{\bf Table II: Non-symmetric examples with $\bf{Q}$ of Segre type $\bf{[(1,12)]}$.} \nopagebreak\\[3 pt]
\begin{tabular}{!{\vrule width 1pt}p{1.1cm}|p{7cm}|p{3cm}|p{2cm}!{\vrule width 1pt}}
\noalign{\hrule height 1pt}
Case&Invariant metric&$\vphantom{A^{A^{A^A}}}\bar\D$&$\D$\\
\hline
$1.1^1$:$1$ $\vphantom{\displaystyle\frac{A^a}{A^a}}$&$2a\om_1\om_3+2c\om_2\om_4+d\om_4\om_4$&\xmark&$\lbrace u_{2}\rbrace$\\ \hline
$1.1^1$:$2$ $\vphantom{\displaystyle\frac{A^a}{A^a}}$&$2a\om_1\om_3+2c\om_2\om_4+d\om_4\om_4$&\xmark&$\lbrace u_{2}\rbrace$\\ \hline
$1.3^1$:$5$ $\vphantom{\displaystyle\frac{A^a}{A^a}}$&$2a(-\om_1\om_4+\om_2\om_3)+b\om_3\om_3+2c\om_3\om_4-\frac{2c}{\lambda}\om_4\om_4$ &$\lbrace u_{1},u_{2}\rbrace$&$\{u_2\}$\\
\hline
$1.3^1$:$7$ $\vphantom{\displaystyle{A^A}}$ $\vphantom{\displaystyle\frac{A^a}{A^a}}$&$2a(-\om_1\om_4+\om_2\om_3)+b\om_3\om_3+2c\om_3\om_4-2c\om_4\om_4$ &$\lbrace u_{1},u_{2}\rbrace$&$\lbrace u_{2}\rbrace$\\
\hline
$1.3^1$:$12$&$2a(-\om_1\om_4+\om_2\om_3)+2c\om_3\om_4+d\om_4\om_4$&
\xmark&$\lbrace u_{1}\rbrace$ $\vphantom{\displaystyle\frac{A^a}{A^a}}$\\
\hline
$1.3^1$:$12$ $\vphantom{\displaystyle\frac{A^a}{A^a}}$&
$2a(-\om_1\om_4+\om_2\om_3)+b\om_3\om_3+2c\om_3\om_4+d\om_4\om_4$&
$\{u_1,u_2\}$&$\lbrace u_{1}\rbrace$\\
\hline
$1.3^1$:$12$ $\vphantom{\displaystyle\frac{A^a}{A^a}}$&
$2a(-\om_1\om_4+\om_2\om_3)+b\om_3  \om_3+2c\om_3\om_4+
d\om_4\om_4$&$\lbrace u_{1},u_{2}\rbrace$&$\lbrace u_{1}\rbrace$\\
\hline
$1.3^1$:$19$ $\vphantom{\displaystyle\frac{A^a}{A^a}}$&$2a(-\om_1\om_4+\om_2\om_3)+2c\om_3\om_4+d\om_4\om_4$&$
\lbrace u_{1},u_{2}\rbrace$&$\lbrace u_{1}\rbrace$\\
\hline
{$1.3^1$:$21$}$\vphantom{\displaystyle\frac{A^a}{A^a}}$&$2a(-\om_1\om_4+\om_2\om_3)+2c\om_3\om_4+d\om_4\om_4$&
$\lbrace u_{1},u_{2}\rbrace$&$\lbrace u_{1}\rbrace$\\
\hline
$1.3^1$:$21$ $\vphantom{\displaystyle\frac{A^a}{A^a}}$&
$2a(-\om_1\om_4+\om_2\om_3)+b\om_3\om_3+2c\om_3\om_4+d\om_4\om_4$&$\{u_1,u_2\}$&$\lbrace u_{1}\rbrace$\\
\hline
$1.3^1$:$24$ $\vphantom{\displaystyle\frac{A^a}{A^a}}$&
$2a(-\om_1\om_4+\om_2\om_3)+b\om_3\om_3+2c\om_3\om_4+d\om_4\om_4$&$
\lbrace u_{1},u_{2}\rbrace$&\xmark\\
\hline
$1.3^1$:$25$ $\vphantom{\displaystyle\frac{A^a}{A^a}}$&
$2a(-\om_1\om_4+\om_2\om_3)+b\om_3\om_3+2c\om_3\om_4+d\om_4\om_4$&$
\lbrace u_{1},u_{2}\rbrace$&\xmark\\
\hline
$1.3^1$:$30$ $\vphantom{\displaystyle\frac{A^a}{A^a}}$&
$2a(\om_2\om_3-\om_1\om_4)+b\om_3\om_3+{b(1-\mu)}\om_3\om_4+d\om_4\om_4$&$
\lbrace u_{1},u_{2}\rbrace$&$\lbrace u_{1}\rbrace$\\
\hline
$1.3^1$:$30$ $\vphantom{\displaystyle\frac{A^a}{A^a}}$&
$2a(\om_2\om_3-\om_1\om_4)+b\om_3\om_3+{d(1-\lambda)}\om_3\om_4+d\om_4\om_4$&$
\lbrace u_{1},u_{2}\rbrace$&$\lbrace u_{2}\rbrace$\\
\hline
%
%
$1.4^1$:$9$ $\vphantom{\displaystyle{A^{A^{A^A}}}}$ &$a(-2\om_1\om_3+\om_2\om_2)+b\om_3\om_3+2c\om_3\om_4 \newline {-\frac{a(4r+1)}{4}\om_4\om_4},\quad $&\xmark&$\lbrace u_{1}\rbrace$\\
\hline
$1.4^1$:$10$ &$a(-2\om_1\om_3+\om_2\om_2)+b\om_3\om_3+2c\om_3\om_4 \newline +d\om_4\om_4,\quad ad<0$&\xmark&$\lbrace u_{1}\rbrace$ \\
\hline
$2.2^1$:$2$ $\vphantom{\displaystyle\frac{A^a}{A^a}}$&${2a(\om_1\om_3+\om_2\om_4)+b\om_2\om_2}$&\xmark&$\lbrace u_{4}\rbrace$ \\
\hline
$2.2^1$:$3$ $\vphantom{\displaystyle\frac{A^a}{A^a}}$&${2a(\om_1\om_3+\om_2\om_4)+b\om_2\om_2}$&\xmark&$\lbrace u_{4}\rbrace$\\
\hline
$2.5^1$:$4$ $\vphantom{\displaystyle\frac{A^a}{A^a}}$&${2a(\om_1\om_3+\om_2\om_4)+b\om_3\om_3}$&\xmark&$\lbrace u_{1}\rbrace$\\
\hline
$3.3^1$:$1$ $\vphantom{\displaystyle\frac{A}{A^a}}$&${2a(\om_1\om_3+\om_2\om_4)+b\om_3\om_3}$&\xmark&$\lbrace u_{1}\rbrace$\\
\noalign{\hrule height 1pt}
\end{tabular}
}

{\small
\smallskip
{\bf Table III: Non-symmetric examples with ${\bf Q}$ of Segre type ${\bf [(11,2)]}$.}\nopagebreak\\[3 pt]
\begin{tabular}{!{\vrule width 1pt}p{1.1cm}|p{6.5cm}|p{3cm}|p{3cm}!{\vrule width 1pt}}
\noalign{\hrule height 1pt}
Case&Invariant metric&$\vphantom{A^{A^{A^A}}}\bar\D$&$\D$\\
\hline
$1.1^2$:$1$$\vphantom{\displaystyle\frac{A^a}{A^a}}$&$c(\om_1\om_1+\om_3\om_3)+2b\om_2\om_4+d\om_4\om_4$&\xmark&$\lbrace u_{2}\rbrace$\\
\hline
$1.1^2$:$2$$\vphantom{\displaystyle\frac{A^a}{A^a}}$&$c(\om_1\om_1+\om_3\om_3)+2b\om_2\om_4+d\om_4\om_4$&\xmark&$\lbrace u_{2}\rbrace$\\
\hline
%
%
$1.4^1$:$9$ $\vphantom{\displaystyle{A^{A^{A^A}}}}$&$a(-2\om_1\om_3+\om_2\om_2)+b\om_3\om_3+2c\om_3\om_4
\newline{-\frac{a(4r+1)}{4}\om_4\om_4}$&\xmark&$\lbrace u_{1}\rbrace$\\
\hline
$1.4^1$:$10$ $\vphantom{\displaystyle\frac{A^a}{A}}$ &$a(-2\om_1\om_3+\om_2\om_2)+b\om_3\om_3+2c\om_3\om_4\newline+d\om_4\om_4,\quad ad>0$&\xmark&$\lbrace u_{1}\rbrace$\\
\hline
$2.5^2$:$2$ $\vphantom{\displaystyle\frac{A^a}{A^a}}$&$2a\om_1\om_3+a(\om_2\om_2+\om_4\om_4)+b\om_3\om_3$&\xmark&$\lbrace u_{1}\rbrace$\\
\hline
$3.3^2$:$1$ $\vphantom{\displaystyle\frac{A^a}{A^a}}$&$2a\om_1\om_3+a(\om_2\om_2+\om_4\om_4)+b\om_3\om_3$&\xmark&$\lbrace u_{1}\rbrace$\\
\noalign{\hrule height 1pt}
\end{tabular}
}

\end{document}